\crefname{hypothesis}{Hypothesis}{Hypotheses}
\title{Greedy Block Gauss-Seidel Methods for Solving Large Linear Least Squares Problem\thanks{Submitted to the editors DATE.
\funding{This work was funded by the National Natural Science Foundation of China (No. 11671060) and the Natural Science Foundation Project of CQ CSTC (No. cstc2019jcyj-msxmX0267)} }}
\author{Hanyu Li\thanks{College of Mathematics and Statistics, Chongqing University, Chongqing 401331, P.R. China
  (\email{lihy.hy@gmail.com or hyli@cqu.edu.cn; \ yjzhang@cqu.edu.cn}).}
\and Yanjun Zhang\footnotemark[2]}
\begin{document}

\maketitle

\begin{abstract}
 With a greedy strategy to construct control index set of coordinates firstly and then choosing the corresponding column submatrix in each iteration, we present a greedy block Gauss-Seidel (GBGS) method for solving large linear least squares problem. Theoretical analysis demonstrates that the convergence factor of the GBGS method can be much smaller than that of the greedy randomized coordinate descent (GRCD) method proposed recently in the literature. 
On the basis of the GBGS method, we further present a pseudoinverse-free greedy block Gauss-Seidel method, which doesn't need to calculate the Moore-Penrose pseudoinverse of the column submatrix in each iteration any more
and hence can be  achieved greater acceleration. Moreover, this method can also be used for distributed implementations.
Numerical experiments 
show that, for the same accuracy, our methods can far outperform the GRCD method in terms of the iteration number and computing time.
\end{abstract}

\begin{keywords}
greedy block Gauss-Seidel method, pseudoinverse-free, greedy randomized coordinate descent method, large linear least squares problem
\end{keywords}

\begin{AMS}
  65F10, 65F20
\end{AMS}

\section{Introduction}\label{intro}
As we know, the linear least squares problem is a classical problem in numerical linear algebra and has numerous important applications in many fields. 
There are many direct methods for solving this problem such as 
the method by 
the QR factorization with pivoting and the method by 
the SVD \cite{Osborne1996,Higham2002}. However, these direct methods usually require high storage and are expensive when the involved matrix is large-scale. Hence, some iterative methods are provided for solving large linear least squares problem.  
The Gauss-Seidel method \cite{Saad2003} is a very famous one, which,  at each iteration,  
first selects a coordinate $j_k\in \{1, \ldots, n\}$ and then minimizes the objective $L(x)=\frac{1}{2} \|{b}-{A} {x}\|^2_{2}$ with respect to this coordinate. This leads to the following iterative process:
\begin{equation*}
\label{gs}
x_{k+1}=x_{k}+\frac{ A^{T}_{(j_k)}(b-Ax_{k}) }{ \| A_{\left(j_{k}\right)} \|_{2}^{2}}e_{j_k},
\end{equation*}
where $A_{\left(j_{k}\right)}$ denotes the $j_k$th column of $A\in R^{m\times n}$ and $e_{j_k}$ is the $j_k$th coordinate basis column vector, i.e., the $j_k$th column of the identity matrix $I$.

In 2010,  by using a probability distribution to select the column of $A$ randomly in each iteration, 
Leventhal and Lewis \cite{Leventhal2010} considered the randomized Gauss-Seidel (RGS) method, which is also called the randomized coordinate descent method, and showed that it converges linearly in expectation to the solution. Subsequently, many works on RGS method were reported 
due to its better performance; see, for example \cite{Ma2015,Hefny2017,Edalatpour2017,Tu2017,Chen2017,Tian2017,Xu1,Razaviyayn2019,Dukui2019} and references therein.

In 2018, Wu \cite{WEIMAGGIE2018CONVERGENCEOT} developed a randomized block Gauss-Seidel (RBGS) method, i.e., a block version of the RGS method, 
which 
minimizes the objective through multiple coordinates at the same time in each iteration. More specifically, for a subset $\tau \subset \{1, \ldots, n\}$ with $|\tau|=T$, i.e., the number of the elements of the set $\tau$ is $T$, denote the submatrix of $A$ whose columns are indexed by $\tau$ by $A_{\tau}$ and generate a submatrix $I_{\tau}$ of the identity matrix $I$ similarly.
The iterative process can be written as 
\begin{equation}
\label{rbgs}
x_{k+1}=x_{k}+I_{\tau}A^{\dagger}_{\tau}(b-Ax_{k}),
\end{equation}
where $A^{\dagger}_{\tau}$ denotes the Moore-Penrose pseudoinverse of $A_{\tau}$. The discussions in \cite{WEIMAGGIE2018CONVERGENCEOT} show that the RBGS method has better convergence compared with the RGS method. Later, Du and Sun \cite{du2019doubly} generalized the RGS method to a doubly stochastic block Gauss-Seidel method.

Recently, by introducing a more efficient probability criterion for selecting the working column from the matrix $A$, Bai and Wu \cite{Bai2019} proposed a greedy randomized coordinate descent (GRCD) method, 
which is faster than the RGS method in terms of the number of iterations and computing time. 
The idea of greed applied in \cite{Bai2019}  has wide applications and has been used in many works, see for example \cite{Griebel2012,Nguyen2017,Bai2018,Bai2018r,Nutini2018,Zhang2019,Du2019,Liu2019,Haddock2019,Rebrova2019,Niu2020} and references therein.

Inspired by the ideas of the RBGS and GRCD methods, 
we develop a 
greedy block Gauss-Seidel (GBGS) method in the present paper, and show that 
the convergence  factor of the GBGS method can be much smaller than that of the GRCD method. 
Experimental results also demonstrate that the GBGS method can significantly accelerate the GRCD method. A drawback of the GBGS method is that, in the block update rule like \cref{rbgs}, we have to 
compute the Moore-Penrose pseudoinverses in each iteration, which is quite expensive and also makes the method 
be not adequate for distributed implementations. To tackle this problem, inspired by the idea in \cite{Necoara2019,Du20202}, 
we present a pseudoinverse-free greedy block Gauss-Seidel (PGBGS) method, which 
improves the GBGS method in computation cost. 

The rest of this paper is organized as follows. In \cref{sec2}, additional notation and preliminaries are provided. We present our GBGS and PGBGS methods and their corresponding convergence properties in \cref{sec3} and \cref{sec4}, respectively. Experimental results are given in \cref{sec5}.
%
%
%
\section{Notation and preliminaries}\label{sec2}

For a vector $z\in R^{n}$, $z^{(j)}$ represents its $j$th entry. For a matrix $G=(g_{ij})\in R^{m\times n}$, 
$\sigma_{\min}(G)$, $\sigma_{\max}(G)$, $\|G\|_2$, $\|G\|_F$ and $\mathrm{range}(G)$ denote its 
minimum positive singular value, maximum singular value, spectral norm, Frobenius norm, and column space, respectively. 
Moreover, if the matrix $G\in R^{n\times n}$ is positive definite, then we define the energy norm of any vector $x\in R^{n}$ as $\| x\|_G:=\sqrt{x^TGx}$. 

In the following, 
we use $x_{\star}=A^{\dag}b$ 
to denote the unique least squares solution to the linear least squares problem:
\begin{equation}
\label{ls}
\min \limits _{\mathbf{x} \in \mathbb{R}^{n}}\|{b}-{A} {x}\|^2_{2},
\end{equation}
where $A\in R^{m\times n}$ is of full column rank and $b\in R^{m}$.
It is known that the solution $x_\star$ 
is the solution to the following normal equation \cite{Osborne1961} for \cref{ls}:
\begin{equation}
\label{1}
A^TAx=A^Tb.
\end{equation}
On the basis of  \cref{1}, Bai and Wu \cite{Bai2019} proposed the GRCD method listed in \cref{alg1}, where $r_k = b-Ax_k$ denotes the residual vector.
\begin{algorithm}
\caption{The GRCD method}
\label{alg1}
\begin{algorithmic}[!htb]
\STATE{\mbox{Input:} ~$A\in R^{m\times n}$, $b\in R^{m}$, $\ell$ , initial estimate $x_0$}
\STATE{\mbox{Output:} ~$x_\ell$}
\FOR{$k=0, 1, 2, \ldots, \ell-1$}
\STATE{Compute
\begin{align}
  \delta_{k}=\frac{1}{2}\left(\frac{1}{\left\|A^Tr_k\right\|_{2}^{2}} \max \limits _{1 \leq j  \leq n}\left\{\frac{\left|A^T_{(j)}r_k\right|^{2}}{\left\|A_{\left(j\right)}\right\|_{2}^{2}}\right\}+\frac{1 }{\|A\|_{F}^{2}}\right).\notag
\end{align}}
\STATE{Determine the index set of positive integers
\begin{align}
 \mathcal{V}_{k}=\left\{j \Bigg|  \left|A^T_{(j)}r_k\right|^{2} \geq \delta_{k} \left\|A^Tr_k\right\|_{2}^{2} \left\|A_{\left(j\right)}\right\|_{2}^{2}\right\}.\notag
\end{align}}
\STATE{Let $s_k=A^Tr_k$ and define $\tilde{s}_k$ as follows \[
\tilde{s}_{k}^{(j)}=\left\{\begin{array}{ll}{s_{k}^{(j)},} & {\text { if } j \in \mathcal{V}_{k}}, \\ {0,} & {\text { otherwise. }}\end{array}\right.
\]}
\STATE{Select $j_k \in \mathcal{V}_{k}$ with probability $Pr(\mathrm{column} = j_k)=\frac{|\tilde{s}_{k}^{(j_k)}|^2}{\|\tilde{s}_{k}\|^2_2}$.}
\STATE{Set
\[x_{k+1}=x_{k}+\frac{ A^{T}_{(j_k)}r_{k} }{ \| A_{\left(j_{k}\right)} \|_{2}^{2}}e_{j_k}.\]}
\ENDFOR
\end{algorithmic}
\end{algorithm}

In addition, to analyze the convergence of our new methods, we need  the following simple result, which can be found in \cite{Bai2018}.
\begin{lemma}
\label{lem1}
For any vector $x \in  \mathrm{range} (A) $, it holds that
\begin{align*}
\|A^{T}x\|^2_2\geq\sigma^{2}_{\min}\left( A\right)\|x\|^2_2.  \label{1210}
\end{align*}
\end{lemma}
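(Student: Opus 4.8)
The plan is to reduce the inequality to a statement about the singular value decomposition of $A$. Since $A \in R^{m\times n}$ has full column rank, I would write its thin SVD as $A = U\Sigma V^T$, where $U \in R^{m\times n}$ has orthonormal columns, $V \in R^{n\times n}$ is orthogonal, and $\Sigma = \diag(\sigma_1,\dots,\sigma_n)$ with $\sigma_1 \geq \cdots \geq \sigma_n = \sigma_{\min}(A) > 0$. The key observation is that $\mathrm{range}(A) = \mathrm{range}(U)$, so every $x \in \mathrm{range}(A)$ can be written as $x = Uz$ for a unique $z \in R^n$, and because $U$ has orthonormal columns, $\|x\|_2 = \|z\|_2$.

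The second step is a direct computation: $A^{T}x = V\Sigma U^{T}Uz = V\Sigma z$, using $U^{T}U = I$. Since $V$ is orthogonal, $\|A^{T}x\|_2^2 = \|\Sigma z\|_2^2 = \sum_{i=1}^{n}\sigma_i^2\,(z^{(i)})^2$. Bounding each $\sigma_i^2$ below by $\sigma_{\min}^2(A)$ yields $\|A^{T}x\|_2^2 \geq \sigma_{\min}^2(A)\sum_{i=1}^{n}(z^{(i)})^2 = \sigma_{\min}^2(A)\|z\|_2^2 = \sigma_{\min}^2(A)\|x\|_2^2$, which is exactly the claim.

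An alternative route that avoids the SVD is to write $x = Ay$ (the vector $y$ being unique since $A$ has full column rank), set $M = A^{T}A$, which is positive definite, and observe that the desired inequality becomes $y^{T}M^{2}y \geq \lambda_{\min}(M)\,y^{T}My$, i.e. $M^{1/2}\bigl(M - \lambda_{\min}(M)I\bigr)M^{1/2} \succeq 0$; this holds because $M - \lambda_{\min}(M)I \succeq 0$, and one then uses $\lambda_{\min}(M) = \sigma_{\min}^2(A)$.

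There is no genuine obstacle here; the one point that must not be glossed over is that the hypothesis $x \in \mathrm{range}(A)$ is essential. For a general $x \in R^m$, any component of $x$ orthogonal to $\mathrm{range}(A)$ lies in the null space of $A^{T}$ and would break the bound, so the proof has to invoke the representation $x = Uz$ (equivalently $x = Ay$) at the very start.
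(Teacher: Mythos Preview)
Your argument is correct; both the SVD route and the alternative via the positive definite matrix $M=A^{T}A$ are valid and standard. Note that the paper does not actually prove this lemma---it simply cites it as a known result from \cite{Bai2018}---so there is no ``paper's own proof'' to compare against, but your SVD argument is exactly the natural justification one would expect.
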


\section{Greedy block Gauss-Seidel method}\label{sec3}


There are two main steps in the GBGS method. The first step is to devise a greedy rule to decide the index set $\mathcal{J}_{k}$ whose specific definition is given in \cref{alg2}, and the second step is to update $x_k$ using a update formula like \cref{rbgs}, with which we can
minimize the objective function through all coordinates from the index set $\mathcal{J}_{k}$ at the same time. Note that the GRCD method only updates one coordinate in each iteration. 

Based on the above introduction, we propose 
\cref{alg2}.
\begin{algorithm}[!htb]
\caption{The GBGS method}
\label{alg2}
\begin{algorithmic}
\STATE{\mbox{Input:} ~$A\in R^{m\times n}$, $b\in R^{m}$, $\theta \in[0,1]$, $\ell$, 
 initial estimate $x_0\in R^{n}$}
\STATE{\mbox{Output:} ~$x_\ell$}
\FOR{$k=0, 1, 2, \ldots, \ell-1$}
\STATE{Compute
\begin{align}
\varepsilon_{k}=\frac{\theta}{\left\|A^Tr_k\right\|_{2}^{2}} \max \limits _{1 \leq j  \leq n}\left\{\frac{\left|A^T_{(j)}r_k\right|^{2}}{\left\|A_{\left(j\right)}\right\|_{2}^{2}}\right\}+\frac{1-\theta }{\|A\|_{F}^{2}} .\notag
\end{align}}
\STATE{Determine the index set of positive integers
\begin{align}
 \mathcal{J}_{k}=\left\{j_k \Bigg|  \left|A^T_{(j_k)}r_k\right|^{2} \geq \varepsilon_{k}\left\|A^Tr_k\right\|_{2}^{2}  \left\|A_{\left(j_k\right)}\right\|_{2}^{2}\right\}.\notag
\end{align}}
\STATE{Set
\begin{align}
\label{up2}
x_{k+1}=x_{k}+I_{\mathcal{J}_{k}}A^{\dag}_{\mathcal{J}_{k}}r_{k}.
\end{align}}
\ENDFOR
\end{algorithmic}
\end{algorithm}

\begin{remark}
\label{rmk1}
As done in \cite{Bai2018r}, if we replace $ \delta_{k}$ in \cref{alg1} by $\varepsilon_{k}$ in \cref{alg2}, we can get a relaxed version of the GRCD method.
In addition, it is easy to see that if $\theta=\frac{1}{2}$, then $\mathcal{J}_{k}=\mathcal{V}_{k}$, i.e., the index sets of the GBGS and GRCD methods are the same. 
\end{remark}

\begin{remark}
\label{rmk2}
Note that if
\[\frac{\left|A^T_{(j_k)}r_k\right|^{2}}{\left\|A_{\left(j_k\right)}\right\|_{2}^{2}}=  \max \limits _{1 \leq j  \leq n}\left\{\frac{\left|A^T_{(j)}r_k\right|^{2}}{\left\|A_{\left(j\right)}\right\|_{2}^{2}}\right\},\]
then $j_k\in\mathcal{J}_{k}.$ So the index set $\mathcal{J}_{k}$ in \cref{alg2} is nonempty for all iteration index $k$.
\end{remark}

In the following, we give the convergence theorem of the GBGS method.

\begin{theorem}
\label{theorem1}
The iteration sequence $\{x_k\}_{k=0}^\infty$ generated by \cref{alg2}, starting from an initial guess $x_0\in R^{n}$, converges linearly to the unique least squares solution $x_{\star}=A^{\dag}b$ and
\begin{equation}
\label{4}
  \| x_1-x_\star\|^2_{A^TA} \leq\left(1- \frac{\|A_{\mathcal{J}_{0}}\|^2_F}{\sigma^2_{\max}(A_{\mathcal{J}_{0}})}  \frac{\sigma^{2}_{\min}(A)}{\left\|A\right\|_{F}^{2}} \right) \| x_0-x_\star\|^2_{A^{T}A},
\end{equation}
and
 \begin{align}
\label{5}
\| x_{k+1}-x_\star\|^2_{A^TA} \leq\left(1- \frac{\|A_{\mathcal{J}_{k}}\|^2_F}{\sigma^2_{\max}(A_{\mathcal{J}_{k}})} \left(\theta\frac{\|A\|^2_F}{\|A\|^2_F-\|A_{\mathcal{J}_{k-1}}\|^2_F}+(1-\theta)\right) \frac{\sigma^{2}_{\min}(A)}{\|A\|^2_F}\right)\| x_k-x_\star\|^2_{A^TA},\\
k=1, 2, \ldots.\notag
\end{align}
Moreover, let $\alpha=\max \{\sigma^2_{\max}(A_{\mathcal{J}_{k}})\}$, $\beta=\min\{\|A_{\mathcal{J}_{k}}\|^2_F\}$, and $\gamma=\max\{\|A\|^2_F-\|A_{\mathcal{J}_{k-1}}\|^2_F\}$. 
Then
\begin{align}
\label{6}
\| x_{k}-x_\star\|^2_{A^TA} &\leq\left(1- \frac{\beta}{\alpha}(\theta\frac{\|A\|^2_F}{\gamma}+(1-\theta)) \frac{\sigma^{2}_{\min}(A)}{\|A\|^2_F} \right)^{k-1}\\
&\times\left(1- \frac{\|A_{\mathcal{J}_{0}}\|^2_F}{\sigma^2_{\max}(A_{\mathcal{J}_{0}})}\frac{\sigma^{2}_{\min}(A)}{\left\|A\right\|_{F}^{2}} \right)\| x_0-x_\star\|^2_{A^TA},
\quad k=1, 2, \ldots.\notag
\end{align}
\end{theorem}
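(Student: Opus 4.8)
The plan is to measure the error in the $A^TA$-energy norm and prove a one-step contraction, then iterate. Write $d_k:=x_k-x_\star$ and $B:=A^TA$; since $A$ has full column rank, $B$ is positive definite and every $A_{\mathcal{J}_k}$ has full column rank, so $A^{\dagger}_{\mathcal{J}_k}=(A^T_{\mathcal{J}_k}A_{\mathcal{J}_k})^{-1}A^T_{\mathcal{J}_k}$. First I would use the normal equation \cref{1}, i.e.\ $A^Tb=Bx_\star$, to note $A^Tr_k=A^Tb-Bx_k=-Bd_k$, and substitute into \cref{up2}. Using $I^T_{\mathcal{J}_k}A^T=A^T_{\mathcal{J}_k}$ and $I^T_{\mathcal{J}_k}BI_{\mathcal{J}_k}=A^T_{\mathcal{J}_k}A_{\mathcal{J}_k}$, this yields the clean recursion $d_{k+1}=(I-M_kB)d_k$ with $M_k:=I_{\mathcal{J}_k}(A^T_{\mathcal{J}_k}A_{\mathcal{J}_k})^{-1}I^T_{\mathcal{J}_k}$.

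Next I would expand $\|d_{k+1}\|^2_{A^TA}=d_k^T(I-BM_k)B(I-M_kB)d_k$ and use the identity $M_kBM_k=M_k$ (immediate from $I^T_{\mathcal{J}_k}BI_{\mathcal{J}_k}=A^T_{\mathcal{J}_k}A_{\mathcal{J}_k}$) to collapse the cross terms, obtaining the exact energy drop $\|d_{k+1}\|^2_{A^TA}=\|d_k\|^2_{A^TA}-(A^T_{\mathcal{J}_k}r_k)^T(A^T_{\mathcal{J}_k}A_{\mathcal{J}_k})^{-1}(A^T_{\mathcal{J}_k}r_k)$, where $A^T_{\mathcal{J}_k}r_k=-I^T_{\mathcal{J}_k}Bd_k$. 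I would then bound the subtracted term below in three moves: (i) $(A^T_{\mathcal{J}_k}r_k)^T(A^T_{\mathcal{J}_k}A_{\mathcal{J}_k})^{-1}(A^T_{\mathcal{J}_k}r_k)\ge\|A^T_{\mathcal{J}_k}r_k\|^2_2/\sigma^2_{\max}(A_{\mathcal{J}_k})$; (ii) summing the defining inequality of $\mathcal{J}_k$ over $j\in\mathcal{J}_k$ gives $\|A^T_{\mathcal{J}_k}r_k\|^2_2=\sum_{j\in\mathcal{J}_k}|A^T_{(j)}r_k|^2\ge\varepsilon_k\|A^Tr_k\|^2_2\|A_{\mathcal{J}_k}\|^2_F$; (iii) since $\|A^Tr_k\|^2_2=\|A^TAd_k\|^2_2$ with $Ad_k\in\mathrm{range}(A)$, \cref{lem1} applied to $Ad_k$ gives $\|A^Tr_k\|^2_2\ge\sigma^2_{\min}(A)\|Ad_k\|^2_2=\sigma^2_{\min}(A)\|d_k\|^2_{A^TA}$. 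Combining these produces $\|d_{k+1}\|^2_{A^TA}\le\bigl(1-\varepsilon_k\frac{\|A_{\mathcal{J}_k}\|^2_F}{\sigma^2_{\max}(A_{\mathcal{J}_k})}\sigma^2_{\min}(A)\bigr)\|d_k\|^2_{A^TA}$.

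It then remains to lower-bound $\varepsilon_k$. For $k=0$, the elementary inequality $\max_j\frac{|A^T_{(j)}r_0|^2}{\|A_{(j)}\|^2_2}\ge\frac{\sum_j|A^T_{(j)}r_0|^2}{\sum_j\|A_{(j)}\|^2_2}=\frac{\|A^Tr_0\|^2_2}{\|A\|^2_F}$ gives $\varepsilon_0\ge\theta/\|A\|^2_F+(1-\theta)/\|A\|^2_F=1/\|A\|^2_F$, which is exactly \cref{4}. For $k\ge1$, I would first establish the orthogonality $A^T_{\mathcal{J}_{k-1}}r_k=0$: applying $I^T_{\mathcal{J}_{k-1}}B$ to $d_k=(I-M_{k-1}B)d_{k-1}$ and using $I^T_{\mathcal{J}_{k-1}}BM_{k-1}=I^T_{\mathcal{J}_{k-1}}$ makes the right-hand side vanish, so $A^T_{(j)}r_k=0$ for every $j\in\mathcal{J}_{k-1}$; hence the maximum over all $j$ equals the maximum over $j\notin\mathcal{J}_{k-1}$, and the same mediant inequality restricted to that index set gives $\max_j\frac{|A^T_{(j)}r_k|^2}{\|A_{(j)}\|^2_2}\ge\frac{\|A^Tr_k\|^2_2}{\|A\|^2_F-\|A_{\mathcal{J}_{k-1}}\|^2_F}$, whence $\varepsilon_k\ge\frac{1}{\|A\|^2_F}\bigl(\theta\frac{\|A\|^2_F}{\|A\|^2_F-\|A_{\mathcal{J}_{k-1}}\|^2_F}+(1-\theta)\bigr)$; substituting into the one-step bound gives \cref{5}. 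Finally \cref{6} follows by replacing $\sigma^2_{\max}(A_{\mathcal{J}_k})$, $\|A_{\mathcal{J}_k}\|^2_F$, $\|A\|^2_F-\|A_{\mathcal{J}_{k-1}}\|^2_F$ with the uniform constants $\alpha$, $\beta$, $\gamma$ and composing \cref{5} over iterations $1,\dots,k-1$ on top of \cref{4}; linear convergence to $x_\star$ is clear because $\|A_{\mathcal{J}_k}\|^2_F\ge\sigma^2_{\max}(A_{\mathcal{J}_k})$ and the $\theta$-factor is at least $1$, so each contraction factor lies in $[0,1)$.

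The step requiring the most care is the $k\ge1$ argument — deriving the orthogonality relation $A^T_{\mathcal{J}_{k-1}}r_k=0$ left behind by the previous block projection and then pairing it correctly with the restricted mediant inequality to sharpen $\varepsilon_k$. The projection algebra in the energy identity (the identity $M_kBM_k=M_k$ and the cancellation of the cross terms) must also be handled carefully, though it is routine once the substitution $A^Tr_k=-Bd_k$ is in place.
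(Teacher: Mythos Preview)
Your proof is correct and follows essentially the same route as the paper. The only cosmetic difference is that the paper carries out the energy identity in $R^m$ by writing $A(x_{k+1}-x_\star)=(I-A_{\mathcal{J}_k}A^{\dagger}_{\mathcal{J}_k})A(x_k-x_\star)$ and invoking Pythagoras for the orthogonal projector $A_{\mathcal{J}_k}A^{\dagger}_{\mathcal{J}_k}$, whereas you work in $R^n$ with $d_k$ and the identity $M_kBM_k=M_k$; the resulting drop term $(A^T_{\mathcal{J}_k}r_k)^T(A^T_{\mathcal{J}_k}A_{\mathcal{J}_k})^{-1}(A^T_{\mathcal{J}_k}r_k)$ is literally the same quantity as the paper's $\|(A^{\dagger}_{\mathcal{J}_k})^TA^T_{\mathcal{J}_k}A(x_k-x_\star)\|_2^2$, and all subsequent bounds (the $\sigma^2_{\max}$ step, the $\mathcal{J}_k$-sum, \cref{lem1}, the orthogonality $A^T_{\mathcal{J}_{k-1}}r_k=0$, and the mediant inequality) coincide.
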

\begin{proof}
 From the update formula \cref{up2} in \cref{alg2}, we have
 \begin{align}
A(x_{k+1}-x_{\star})&=~A(x_{k}-x_{\star})+A_{\mathcal{J}_{k}}A^{\dag}_{\mathcal{J}_{k}}r_{k},  \notag
  \end{align}
  which together with the fact $A_{\mathcal{J}_{k}}A^{\dag}_{\mathcal{J}_{k}}=(A_{\mathcal{J}_{k}}A^{\dag}_{\mathcal{J}_{k}})^{T}$ and $A^TAx_{\star}=A^Tb$ gives
\begin{align}
A(x_{k+1}-x_{\star})&= ~ A(x_{k}-x_{\star})+(A^{\dag}_{\mathcal{J}_{k}})^{T}A^{T}_{\mathcal{J}_{k}}(b-Ax_{k})  \notag\\
&= ~\left(I-A_{\mathcal{J}_{k}}A^{\dag}_{\mathcal{J}_{k}} \right)A\left(x_{k}-x_\star\right). \notag
\end{align}
Since $A_{\mathcal{J}_{k}}A^{\dag}_{\mathcal{J}_{k}}$ is an orthogonal projector, taking the square of the Euclidean norm on both sides of the above equation and applying Pythagorean theorem, we get
\begin{align}
\|A(x_{k+1}-x_{\star})\|^2_2&= ~\|\left(I-A_{\mathcal{J}_{k}}A^{\dag}_{\mathcal{J}_{k}} \right)A\left(x_{k}-x_\star\right)\|^2_2\notag\\
&= ~\|A\left(x_{k}-x_\star\right)\|^2_2-\|A_{\mathcal{J}_{k}}A^{\dag}_{\mathcal{J}_{k}}A\left(x_{k}-x_\star\right)\|^2_2\notag\\
&= ~\|A\left(x_{k}-x_\star\right)\|^2_2-\|(A^{\dag}_{\mathcal{J}_{k}})^{T}A^{T}_{\mathcal{J}_{k}}A\left(x_{k}-x_\star\right)\|^2_2,\notag
\end{align}
or equivalently,
\begin{align}
\left\|x_{k+1}-x_{\star}\right\|^2_{A^{T}A}=\left\|x_{k}-x_{\star}\right\|^2_{A^{T}A}-\|(A^{\dag}_{\mathcal{J}_{k}})^{T}A^{T}_{\mathcal{J}_{k}}A\left(x_{k}-x_\star\right)\|^2_2,\notag
\end{align}
which together with \cref{lem1} and the fact $\sigma^2_{\min}(A^{\dag}_{\mathcal{J}_{k}})=\sigma^{-2}_{\max}(A_{\mathcal{J}_{k}})$ yields
\begin{align}
\left\|x_{k+1}-x_{\star}\right\|^2_{A^{T}A}&\leq~\left\|x_{k}-x_{\star}\right\|^2_{A^{T}A}-\sigma^2_{\min}(A^{\dag}_{\mathcal{J}_{k}})\|A^{T}_{\mathcal{J}_{k}}A\left(x_{k}-x_\star\right)\|^2_2\notag\\
&=~\left\|x_{k}-x_{\star}\right\|^2_{A^{T}A}-\frac{1}{\sigma^2_{\max}(A_{\mathcal{J}_{k}})}\|A^{T}_{\mathcal{J}_{k}}A\left(x_{k}-x_\star\right)\|^2_2.\notag
\end{align}
On the other hand, from \cref{alg2}, we have
\begin{align}
\|A^{T}_{\mathcal{J}_{k}}A\left(x_{k}-x_\star\right)\|^2_2= \sum\limits_{j_k\in  \mathcal{J}_{k}}  \left|A^T_{(j_k)}r_k\right|^2
\geq\sum\limits_{j_k\in  \mathcal{J}_{k}} \varepsilon_{k} \left\|A^Tr_k\right\|_{2}^{2} \left\|A_{\left(j_k\right)}\right\|_{2}^{2}
= \varepsilon_{k}\left\|A^Tr_k\right\|_{2}^{2}  \|A_{\mathcal{J}_{k}}\|^2_F. \notag
\end{align}
Then,
\begin{align}
\left\|x_{k+1}-x_{\star}\right\|^2_{A^{T}A}\leq\left\|x_{k}-x_{\star}\right\|^2_{A^{T}A}-\frac{\|A_{\mathcal{J}_{k}}\|^2_F}{\sigma^2_{\max}(A_{\mathcal{J}_{k}})}\left\|A^Tr_k\right\|_{2}^{2} \varepsilon_{k}.\label{1217}
\end{align}

For $k=0$, we have
\begin{align}
\left\|A^Tr_0\right\|_{2}^{2} \varepsilon_{0}&=~\theta \max \limits _{1 \leq j  \leq n}\left\{\frac{\left|A^T_{(j)}r_0\right|^{2}}{\left\|A_{\left(j\right)}\right\|_{2}^{2}}\right\}+(1-\theta )\frac{ \left\|A^Tr_0\right\|_{2}^{2}}{\|A\|_{F}^{2}}\notag
\\
&\geq~\theta \sum\limits_{j=1}^{n} \frac{\left\|A_{\left(j\right)}\right\|_{2}^{2}}{\|A\|_{F}^{2}} \frac{\left|A^T_{(j)}r_0\right|^{2}}{\left\|A_{\left(j\right)}\right\|_{2}^{2}}+(1-\theta )\frac{ \left\|A^Tr_0\right\|_{2}^{2}}{\|A\|_{F}^{2}}\notag \\
&=~\theta\frac{\|A^Tr_0\|^{2}_2}{\left\|A\right\|_{F}^{2}}+(1-\theta )\frac{ \left\|A^Tr_0\right\|_{2}^{2}}{\|A\|_{F}^{2}}=\frac{ \left\|A^Tr_0\right\|_{2}^{2}}{\|A\|_{F}^{2}},\notag
\end{align}
which together with \cref{lem1} leads to
\begin{align}
\left\|A^Tr_0\right\|_{2}^{2} \varepsilon_{0}\geq  \frac{1}{\left\|A\right\|_{F}^{2}} \sigma^{2}_{\min}(A)\|A(x_0-x_\star)\|^{2}_2=  \frac{\sigma^{2}_{\min}(A)}{\left\|A\right\|_{F}^{2}} \|x_0-x_\star\|^{2}_{A^{T}A}.\label{1212111}
\end{align}
Thus, substituting \cref{1212111} into \cref{1217}, we obtain
\begin{align}
\left\|x_{1}-x_{\star}\right\|^2_{A^{T}A}&\leq~\left\|x_{0}-x_{\star}\right\|^2_{A^{T}A}-\frac{\|A_{\mathcal{J}_{0}}\|^2_F}{\sigma^2_{\max}(A_{\mathcal{J}_{0}})}  \frac{\sigma^{2}_{\min}(A)}{\left\|A\right\|_{F}^{2}} \|x_0-x_\star\|^{2}_{A^{T}A} \notag\\
&=~\left(1- \frac{\|A_{\mathcal{J}_{0}}\|^2_F}{\sigma^2_{\max}(A_{\mathcal{J}_{0}})}  \frac{\sigma^{2}_{\min}(A)}{\left\|A\right\|_{F}^{2}} \right) \| x_0-x_\star\|^2_{A^{T}A}, \notag
\end{align}
which is just the estimate \cref{4}.

For $k\geq1$, to find the lower bound of $\left\|A^Tr_k\right\|_{2}^{2} \varepsilon_{k}$, first note that
\begin{align}
 A^T_{ \mathcal{J}_{k-1} }r_k&=  ~A^T_{\mathcal{J}_{k-1}}\left(b-A(x_{k-1}+I_{\mathcal{J}_{k-1}}A^{\dag}_{\mathcal{J}_{k-1}}r_{k-1})  \right)   \notag\\
&= ~A^T_{ \mathcal{J}_{k-1}}r_{k-1}-A^T_{ \mathcal{J}_{k-1}} A_{\mathcal{J}_{k-1}}A^{\dag}_{\mathcal{J}_{k-1}}r_{k-1}  \notag\\
&= ~0, \notag
\end{align}
where we have used the update formula \cref{up2} and the property of the Moore-Penrose pseudoinverse.
Then
\begin{align}
\|A^{T}r_k\|^2_2&=~\sum\limits_{j=1}^{n}|A^{T}_{(j)}r_k|^2=\sum\limits_{  j=1 \atop  j\notin  \mathcal{J}_{k-1}}^n|A^{T}_{(j)}r_k|^2 =~\sum\limits_{  j=1 \atop  j\notin  \mathcal{J}_{k-1}}^n\frac{|A^{T}_{(j)}r_k|^2}{\|A_{j}\|^2_2} \|A_{j}\|^2_2\notag\\
&\leq~\max\limits_{1\leq j\leq n  }\frac{|A^{T}_{(j)}r_k|^2}{\|A_{j}\|^2_2}  \sum\limits_{  j=1 \atop  j\notin  \mathcal{J}_{k-1}}^n\|A_{j}\|^2_2=~\max\limits_{1\leq j\leq n  }\frac{|A^{T}_{(j)}r_k|^2}{\|A_{j}\|^2_2} (\|A\|^2_F-\|A_{\mathcal{J}_{k-1}}\|^2_F),\notag
\end{align}
which can first imply a lower bound of $\max\limits_{1\leq j\leq n  }\frac{|A^{T}_{(j)}r_k|^2}{\|A_{j}\|^2_2}$ and then of $\left\|A^Tr_k\right\|_{2}^{2} \varepsilon_{k}$ as follows
\begin{align}
\left\|A^Tr_k\right\|_{2}^{2} \varepsilon_{k}
\geq~ \theta\frac{\|A^{T}r_k\|^2_2}{\|A\|^2_F-\|A_{\mathcal{J}_{k-1}}\|^2_F}+(1-\theta)\frac{ \left\|A^Tr_k\right\|_{2}^{2}}{\|A\|_{F}^{2}}.\notag
\end{align}
Further, considering 
\cref{lem1}, we have
\begin{align}
\left\|A^Tr_k\right\|_{2}^{2}\varepsilon_{k}&\geq~  \left(\theta\frac{1}{\|A\|^2_F-\|A_{\mathcal{J}_{k-1}}\|^2_F}+(1-\theta)\frac{ 1}{\|A\|_{F}^{2}}\right) \sigma^{2}_{\min}(A) \|A(x_k-x_\star)\|^2_2\notag\\
&=~ \left(\theta\frac{\|A\|^2_F}{\|A\|^2_F-\|A_{\mathcal{J}_{k-1}}\|^2_F}+(1-\theta)\right) \frac{\sigma^{2}_{\min}(A)}{\|A\|^2_F} \|x_k-x_\star\|^{2}_{A^{T}A}.\label{12121111}
\end{align}
Thus, substituting \cref{12121111} into \cref{1217} gives
the estimate \cref{5}. By induction on the iteration index $k$, we can obtain the estimate \cref{6}.
\end{proof}

\begin{remark}
\label{rmk3111}
If $\theta=\frac{1}{2}$, i.e., the index sets of the GBGS and GRCD methods are the same, the first term in the right side of \cref{5} reduces to
$$
\eta=1- \frac{\|A_{\mathcal{J}_{k}}\|^2_F}{\sigma^2_{\max}(A_{\mathcal{J}_{k}})} \frac{1}{2}\left( \frac{\|A\|^2_F}{\|A\|^2_F-\|A_{\mathcal{J}_{k-1}}\|^2_F}+1\right) \frac{\sigma^{2}_{\min}(A)}{\|A\|^2_F}.
$$
Since
\[ \|A\|^2_{F}-\|A_{\mathcal{J}_{k-1}}\|^2_{F}\leq\|A\|^2_F-\min\limits_{1\leq j\leq n}\|A_{(j)}\|^{2}_2\ \textrm{ and }\ \frac{\|A_{\mathcal{J}_{k}}\|^2_{F}}{\sigma^2_{\max}(A_{\mathcal{J}_{k}})}\geq1,\]
we have
\begin{eqnarray*}
\eta \leqslant 1- \frac{1}{2} \left(\frac{\|A\|_{F}^{2}}{\|A\|_{F}^{2}-\min \limits_{1 \leq j \leq n}\left\|A_{(j)}\right\|_{2}^{2}}+1 \right)\frac{\sigma^2_{\min}(A)}{\|A\|_{F}^{2}}.
\end{eqnarray*}
Note that the error estimate in expectation of the GRCD method given in \cite{Bai2019} is
\[
\mathbb{E}_{k}\left\|x_{k+1}-x_{\star}\right\|_{A^{T} A}^{2} \leq\left (1- \frac{1}{2} \left(\frac{\|A\|_{F}^{2}}{\|A\|_{F}^{2}-\min \limits_{1 \leq j \leq n}\left\|A_{(j)}\right\|_{2}^{2}}+1 \right)\frac{\sigma^2_{\min}(A)}{\|A\|_{F}^{2}}\right  )\left\|x_{k}-x_{\star}\right\|_{A^{T} A}^{2} , \]
where $ k=1,2, \ldots.$ So the convergence  factor of the GBGS method is 
smaller than that of the GRCD method in this case, 
and the former can be much smaller than the latter because
$\|A\|^2_{F}-\|A_{\mathcal{J}_{k-1}}\|^2_{F}$ can be much smaller than $\|A\|^2_F-\min\limits_{1\leq j\leq n}\|A_{(j)}\|^{2}_2$ and $\|A_{\mathcal{J}_{k}}\|^2_{F}$ can be much larger than $\sigma^2_{\max}(A_{\mathcal{J}_{k}})$. 

\end{remark}

\begin{remark}
\label{rmk4}
Very recently, Niu and Zheng \cite{Niu2020} provided a greedy block Kaczmarz algorithm for solving large-scale linear systems, which is a block version of the famous greedy randomized Kaczmarz method given in \cite{Bai2018}. The technique in \cite{Niu2020} can be also applied to develop a block version of the greedy Gauss-Seidel method.

\end{remark}

\section{Pseudoinverse-free greedy block Gauss-Seidel method}\label{sec4}
As mentioned in \cref{intro},  there is a drawback in the update rule \cref{up2} in \cref{alg2}, that is, 
we need to compute the Moore-Penrose pseudoinverse of the column submatrix $A_{\mathcal{J}_{k}}$ in each iteration. 
Inspired by the pseudoinverse-free block Kaczmarz methods \cite{Necoara2019, Du20202}, 
we design a PGBGS method. 
More specifically, assume 
 $|\mathcal{J}_{k}|=t$ and let $\mathcal{J}_{k}=\{j_{k1}, j_{k2}, \ldots, j_{kt}\}$. 
 Rather than using the update rule \cref{up2}, we execute all coordinates in $\mathcal{J}_{k}$ simultaneously using the following update rule
\begin{align}
x_{k+1}&=~x_{k}+\omega\frac{ A^{T}_{(j_{k1})}r_{k} }{ \| A_{\left(j_{k1}\right)} \|_{2}^{2}}e_{j_{k1}}+\omega\frac{ A^{T}_{(j_{k2})}r_{k} }{ \| A_{\left(j_{k2}\right)} \|_{2}^{2}}e_{j_{k2}}+\cdots+\omega\frac{ A^{T}_{(j_{kt})}r_{k} }{ \| A_{\left(j_{kt}\right)} \|_{2}^{2}}e_{j_{kt}} \notag\\
&=~x_{k}+\omega\frac{e_{j_{k1}}}{\| A_{\left(j_{k1}\right)} \|_{2}}\frac{ A^{T}_{(j_{k1})}}{ \| A_{\left(j_{k1}\right)} \|_{2}}r_{k}+\omega\frac{e_{j_{k2}}}{\| A_{\left(j_{k2}\right)} \|_{2}}\frac{ A^{T}_{(j_{k2})}}{ \| A_{\left(j_{k2}\right)} \|_{2}}r_{k}+\cdots+\omega\frac{e_{j_{kt}}}{\| A_{\left(j_{kt}\right)} \|_{2}}\frac{ A^{T}_{(j_{kt})}}{ \| A_{\left(j_{kt}\right)} \|_{2}}r_{k}\notag\\
&=~x_{k}+\omega\widetilde{I}_{\mathcal{J}_{k}}\widetilde{A}^{T}_{\mathcal{J}_{k}}r_{k},\label{derive}
\end{align}
where
\begin{eqnarray*}
\widetilde{I}_{\mathcal{J}_{k}}&=&\left[\frac{e_{j_{k1}}}{\| A_{\left(j_{k1}\right)} \|_{2}}, \frac{e_{j_{k2}}}{\| A_{\left(j_{k2}\right)} \|_{2}}, \ldots, \frac{e_{j_{kt}}}{\| A_{\left(j_{kt}\right)} \|_{2}}\right]\in R^{n\times t},\\
\widetilde{A}^{T}_{\mathcal{J}_{k}}&=&\left[\frac{ A_{(j_{k1})}}{ \| A_{\left(j_{k1}\right)} \|_{2}}, \frac{ A_{(j_{k2})}}{ \| A_{\left(j_{k2}\right)} \|_{2}}, \ldots, \frac{ A_{(j_{kt})}}{ \| A_{\left(j_{kt}\right)} \|_{2}}\right]^{T}\in R^{t\times m},
\end{eqnarray*}
and $\omega$  is used to adjust the convergence of the method and satisfies
\[
\left\{\begin{array}{ll}{0<\omega<\frac{2}{\sigma^2_{\max}(\widetilde{A}_{\mathcal{J}_{k}})},} & {\text { if } \Delta<0}, \\ {0<\omega<\frac{1-\sqrt{\Delta}}{\sigma^2_{\max}(\widetilde{A}_{\mathcal{J}_{k}})}~ \text {or} ~ \frac{1+\sqrt{\Delta}}{\sigma^2_{\max}(\widetilde{A}_{\mathcal{J}_{k}})}<\omega<\frac{2}{\sigma^2_{\max}(\widetilde{A}_{\mathcal{J}_{k}})},} & {\text { otherwise, }}\end{array}\right.
\]
where $\Delta=1-\frac{\sigma^2_{\max}(\widetilde{A}_{\mathcal{J}_{k}})\|A\|^2_F}{|\mathcal{J}_{k}|\sigma^{2}_{\min}(A)}$. It should be pointed out that the above conditions are only sufficient  but not necessary ones.

In summary,  we have \cref{alg3}.
\begin{algorithm}[!htb]
\caption{The PGBGS method}
\label{alg3}
\begin{algorithmic}
\STATE{\mbox{Input:} ~$A\in R^{m\times n}$, $b\in R^{m}$, $\theta \in[0,1]$, $\omega$, $\ell$, 
initial estimate $x_0\in R^{n}$}
\STATE{\mbox{Output:} ~$x_\ell$}
\FOR{$k=0, 1, 2, \ldots, \ell-1$}
\STATE{Compute
\begin{align}
\varepsilon_{k}=\frac{\theta}{\left\|A^Tr_k\right\|_{2}^{2}} \max \limits _{1 \leq j  \leq n}\left\{\frac{\left|A^T_{(j)}r_k\right|^{2}}{\left\|A_{\left(j\right)}\right\|_{2}^{2}}\right\}+\frac{1-\theta }{\|A\|_{F}^{2}} .\notag
\end{align}}
\STATE{Determine the index set of positive integers
\begin{align}
 \mathcal{J}_{k}=\left\{j_k \Bigg|  \left|A^T_{(j_k)}r_k\right|^{2} \geq \varepsilon_{k}\left\|A^Tr_k\right\|_{2}^{2}  \left\|A_{\left(j_k\right)}\right\|_{2}^{2}\right\}.\notag
\end{align}}
\STATE{Set
\begin{align}
\label{up3}
x_{k+1}=x_{k}+\omega\widetilde{I}_{\mathcal{J}_{k}}\widetilde{A}^{T}_{\mathcal{J}_{k}}r_{k}.
\end{align}}
\ENDFOR
\end{algorithmic}
\end{algorithm}

\begin{remark}
\label{rmk5}
To compare the computation cost of the GRCD, GBGS and PGBGS methods, we present the flops of the update rules of the three methods in \cref{TAB1}, where we have used the fact $A^{\dag}_{\mathcal{J}_{k}}=(A^T_{\mathcal{J}_{k}}A_{\mathcal{J}_{k}})^{-1}A^T_{\mathcal{J}_{k}}$.

\begin{table}[!htbp]\centering
\begin{small}\scriptsize
\caption{Flops for the update rules of the \texttt{GRCD}, \texttt{GBGS} and \texttt{PGBGS} methods.}\centering \label{TAB1}
 \begin{tabular}{|c|c|c|c|}
 \hline
\textbf{Method}  & \textbf{GRCD}     & \textbf{GBGS} &\textbf{PGBGS} \\
\hline
\textbf{Flops}  & $2(m+n)$     &$2|\mathcal{J}_{k}|^3+(4m-3)|\mathcal{J}_{k}|^2+(m+2n)|\mathcal{J}_{k}|$   &$(2m+2n+1)|\mathcal{J}_{k}|$ \\
\hline
\end{tabular}
\end{small}
\end{table}

It is easy to find that $2|\mathcal{J}_{k}|^3+(4m-3)|\mathcal{J}_{k}|^2+(m+2n)|\mathcal{J}_{k}|>(2m+2n+1)|\mathcal{J}_{k}|$. 
So the GBGS method needs more flops compared with the PGBGS method. 
Thus, for the same accuracy, the latter needs less computing time. More importantly, from the derivation of \cref{derive}, we can find that the PGBGS  method can be used for distributed implementations, which can yield more significant improvements in computation cost.
Although the GRCD method requires fewer flops in each iteration compared with our two block methods, the GRCD method needs much more iterations because it only executes one column in one iteration. So our methods behave better in the total computing time. Moreover, if $|\mathcal{J}_{k}|$ is small, the difference between the flops needed in the GRCD and PGBGS may be negligible.  These results are confirmed by experimental results given in \cref{sec5}.
\end{remark}

In the following, we give the convergence theorem of the PGBGS method.
\begin{theorem}
\label{theorem2}
The iteration sequence $\{x_k\}_{k=0}^\infty$ generated by \cref{alg3}, starting from an initial guess $x_0\in R^{n}$, converges linearly to the unique least squares solution $x_{\star}=A^{\dag}b$ and
\begin{align}
\left\|x_{k+1}-x_{\star}\right\|^2_{A^{T}A}&\leq~\left(1-(2\omega-\omega^2\sigma^2_{\max}(\widetilde{A}_{\mathcal{J}_{k}})) |\mathcal{J}_{k}| \frac{\sigma^{2}_{\min}(A)}{\left\|A\right\|_{F}^{2}}\right)\|x_k-x_\star\|^{2}_{A^{T}A}.\label{4sd}
\end{align}
\end{theorem}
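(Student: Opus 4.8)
The plan is to follow the template of the proof of \cref{theorem1}, reducing everything to a one-step contraction in the energy norm $\|\cdot\|_{A^TA}$. Write $d_k := x_k - x_\star$. The key algebraic observation is that $A\widetilde{I}_{\mathcal{J}_{k}} = \widetilde{A}_{\mathcal{J}_{k}}$, since the $i$-th column of $\widetilde{I}_{\mathcal{J}_{k}}$ is $e_{j_{ki}}/\|A_{(j_{ki})}\|_2$, and multiplying by $A$ produces $A_{(j_{ki})}/\|A_{(j_{ki})}\|_2$, the $i$-th column of $\widetilde{A}_{\mathcal{J}_{k}}$. Multiplying the update \cref{up3} by $A$ therefore gives $A(x_{k+1}-x_\star) = Ad_k + \omega\,\widetilde{A}_{\mathcal{J}_{k}}\widetilde{A}^{T}_{\mathcal{J}_{k}}r_k$. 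Next, restricting the normal equation $A^TAx_\star = A^Tb$ to the rows indexed by $\mathcal{J}_{k}$ yields $A^{T}_{\mathcal{J}_{k}}r_k = -A^{T}_{\mathcal{J}_{k}}Ad_k$, hence $\widetilde{A}^{T}_{\mathcal{J}_{k}}r_k = -\widetilde{A}^{T}_{\mathcal{J}_{k}}Ad_k$ after the common diagonal rescaling. Substituting gives the clean identity
\begin{align*}
A(x_{k+1}-x_\star) = \left(I - \omega\,\widetilde{A}_{\mathcal{J}_{k}}\widetilde{A}^{T}_{\mathcal{J}_{k}}\right)Ad_k.
\end{align*}

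Then I would take Euclidean norms squared. Setting $P := \widetilde{A}_{\mathcal{J}_{k}}\widetilde{A}^{T}_{\mathcal{J}_{k}}$, which is symmetric positive semidefinite with $\|P\|_2 = \sigma^2_{\max}(\widetilde{A}_{\mathcal{J}_{k}})$, we expand $(I-\omega P)^2 = I - 2\omega P + \omega^2 P^2$, and since every eigenvalue $\lambda$ of $P$ obeys $\lambda^2 \le \sigma^2_{\max}(\widetilde{A}_{\mathcal{J}_{k}})\lambda$, we have the operator inequality $P^2 \preceq \sigma^2_{\max}(\widetilde{A}_{\mathcal{J}_{k}})\,P$. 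Using $\widetilde{A}^{T}_{\mathcal{J}_{k}}Ad_k = -\widetilde{A}^{T}_{\mathcal{J}_{k}}r_k$ and $\|A d_k\|_2^2 = \|d_k\|^2_{A^TA}$ this yields
\begin{align*}
\|x_{k+1}-x_\star\|^2_{A^TA} \le \|x_k-x_\star\|^2_{A^TA} - \bigl(2\omega - \omega^2\sigma^2_{\max}(\widetilde{A}_{\mathcal{J}_{k}})\bigr)\bigl\|\widetilde{A}^{T}_{\mathcal{J}_{k}}r_k\bigr\|_2^2,
\end{align*}
where $0<\omega<2/\sigma^2_{\max}(\widetilde{A}_{\mathcal{J}_{k}})$ makes the bracketed coefficient positive.

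Finally I would lower-bound $\bigl\|\widetilde{A}^{T}_{\mathcal{J}_{k}}r_k\bigr\|_2^2 = \sum_{j_k\in\mathcal{J}_{k}} |A^{T}_{(j_k)}r_k|^2 / \|A_{(j_k)}\|_2^2$. By the definition of $\mathcal{J}_{k}$ in \cref{alg3}, each summand is at least $\varepsilon_k\|A^Tr_k\|_2^2$, so the sum is at least $|\mathcal{J}_{k}|\,\varepsilon_k\|A^Tr_k\|_2^2$. The averaging argument already used for the $k=0$ case in the proof of \cref{theorem1} gives $\varepsilon_k\|A^Tr_k\|_2^2 \ge \|A^Tr_k\|_2^2/\|A\|_F^2$, and \cref{lem1} applied to $Ad_k\in\mathrm{range}(A)$ (noting $A^Tr_k = -A^TAd_k$) gives $\|A^Tr_k\|_2^2 \ge \sigma^2_{\min}(A)\|x_k-x_\star\|^2_{A^TA}$. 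Chaining these, $\bigl\|\widetilde{A}^{T}_{\mathcal{J}_{k}}r_k\bigr\|_2^2 \ge |\mathcal{J}_{k}|\,\tfrac{\sigma^2_{\min}(A)}{\|A\|_F^2}\|x_k-x_\star\|^2_{A^TA}$, and substituting above produces exactly \cref{4sd}.

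Linear convergence then follows once the contraction factor is shown to lie in $[0,1)$: strict positivity of the subtracted term (for $\omega$ in the admissible range, $|\mathcal{J}_{k}|\ge1$, $\sigma_{\min}(A)>0$) gives the strict upper bound, while nonnegativity is precisely $\bigl(2\omega-\omega^2\sigma^2_{\max}(\widetilde{A}_{\mathcal{J}_{k}})\bigr)|\mathcal{J}_{k}|\sigma^2_{\min}(A) \le \|A\|_F^2$, i.e. $\sigma^2_{\max}(\widetilde{A}_{\mathcal{J}_{k}})\omega^2 - 2\omega + \|A\|_F^2/(|\mathcal{J}_{k}|\sigma^2_{\min}(A)) \ge 0$, a quadratic in $\omega$ whose discriminant equals $4\Delta$; this is exactly where the two-case restriction on $\omega$ involving $\Delta$ enters (one checks the stated intervals are also contained in $(0,2/\sigma^2_{\max}(\widetilde{A}_{\mathcal{J}_{k}}))$ because $\Delta<1$). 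I expect the only genuinely delicate points to be this final bookkeeping with $\Delta$ and the operator inequality $P^2\preceq\sigma^2_{\max}(\widetilde{A}_{\mathcal{J}_{k}})P$; the remaining steps are a direct adaptation of the argument in \cref{theorem1}.
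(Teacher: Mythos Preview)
Your proposal is correct and follows essentially the same route as the paper: derive $A(x_{k+1}-x_\star)=(I-\omega\widetilde{A}_{\mathcal{J}_{k}}\widetilde{A}^{T}_{\mathcal{J}_{k}})A(x_k-x_\star)$, expand the squared norm and bound the $\omega^2$ term via $\sigma^2_{\max}(\widetilde{A}_{\mathcal{J}_{k}})$ (your operator inequality $P^2\preceq\sigma^2_{\max}(\widetilde{A}_{\mathcal{J}_{k}})P$ is just a repackaging of the paper's bound $\|\widetilde{A}_{\mathcal{J}_{k}}\widetilde{A}^{T}_{\mathcal{J}_{k}}v\|_2^2\le\sigma^2_{\max}(\widetilde{A}_{\mathcal{J}_{k}})\|\widetilde{A}^{T}_{\mathcal{J}_{k}}v\|_2^2$), then lower-bound $\|\widetilde{A}^{T}_{\mathcal{J}_{k}}r_k\|_2^2$ using the definition of $\mathcal{J}_{k}$, the averaging inequality $\varepsilon_k\|A^Tr_k\|_2^2\ge\|A^Tr_k\|_2^2/\|A\|_F^2$, and \cref{lem1}. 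Your final paragraph on the $\Delta$ condition, which the paper states before the theorem but does not spell out in its proof, is a welcome addition and is handled correctly.
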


\begin{proof}
From the update rule \cref{up3} in \cref{alg3}, we have
\begin{align}
A(x_{k+1}-x_{\star})&=~A(x_{k}-x_{\star})+\omega\widetilde{A}_{\mathcal{J}_{k}}\widetilde{A}^{T}_{\mathcal{J}_{k}}r_{k},  \notag
\end{align}
which together with the fact $A^TAx_{\star}=A^Tb$ gives
\begin{align}
A(x_{k+1}-x_{\star})&=~\left(I-\omega\widetilde{A}_{\mathcal{J}_{k}}\widetilde{A}^{T}_{\mathcal{J}_{k}} \right)A\left(x_{k}-x_\star\right). \notag
\end{align}
Taking the square of the Euclidean norm on both sides, we get
\begin{align}
\|A(x_{k+1}-x_{\star})\|^2_2&= ~\|\left(I-\omega\widetilde{A}_{\mathcal{J}_{k}}\widetilde{A}^{T}_{\mathcal{J}_{k}} \right)A\left(x_{k}-x_\star\right)\|^2_2\notag\\
&= ~\|A\left(x_{k}-x_\star\right)\|^2_2-2\omega\|\widetilde{A}^{T}_{\mathcal{J}_{k}}A\left(x_{k}-x_\star\right)\|^2_2+\omega^2\|\widetilde{A}_{\mathcal{J}_{k}}\widetilde{A}^{T}_{\mathcal{J}_{k}}A\left(x_{k}-x_\star\right)\|^2_2\notag\\
&\leq~\|A\left(x_{k}-x_\star\right)\|^2_2-(2\omega-\omega^2\sigma^2_{\max}(\widetilde{A}_{\mathcal{J}_{k}}))\|\widetilde{A}^{T}_{\mathcal{J}_{k}}A\left(x_{k}-x_\star\right)\|^2_2,\notag
\end{align}
or equivalently,
\begin{equation}
\label{754r}
\left\|x_{k+1}-x_{\star}\right\|^2_{A^{T}A}\leq\left\|x_{k}-x_{\star}\right\|^2_{A^{T}A}-(2\omega-\omega^2\sigma^2_{\max}(\widetilde{A}_{\mathcal{J}_{k}}))\|\widetilde{A}^{T}_{\mathcal{J}_{k}}A\left(x_{k}-x_\star\right)\|^2_2.
\end{equation}
On the other hand, from \cref{alg3} and \cref{derive}, we have
\begin{align}
\|\widetilde{A}^{T}_{\mathcal{J}_{k}}A\left(x_{k}-x_\star\right)\|^2_2
&=~\sum\limits_{j_k\in  \mathcal{J}_{k}} \left|\widetilde{A}^T_{(j_k)}r_k\right|^2=\sum\limits_{j_k\in  \mathcal{J}_{k}} \left|\frac{A^T_{(j_k)}}{\|A_{(j_k)\|_2}}r_k\right|^2\notag\\
&\geq~\sum\limits_{j_k\in  \mathcal{J}_{k}}\frac{1}{\left\|A_{\left(j_k\right)}\right\|_{2}^{2}} \varepsilon_{k} \left\|A^Tr_k\right\|_{2}^{2} \left\|A_{\left(j_k\right)}\right\|_{2}^{2}\notag\\
&=~\sum\limits_{j_k\in  \mathcal{J}_{k}} \left(  \theta  \max \limits _{1 \leq j  \leq n}\left\{\frac{\left|A^T_{(j)}r_k\right|^{2}}{\left\|A_{\left(j\right)}\right\|_{2}^{2}}\right\}+(1-\theta)\frac{ \left\|A^Tr_k\right\|_{2}^{2}}{\|A\|_{F}^{2}} \right) \notag\\
&\geq~|\mathcal{J}_{k}|\left(\theta\sum\limits_{j=1}^n \frac{\left|A^T_{(j)}r_k\right|^{2}}{\left\|A_{\left(j\right)}\right\|_{2}^{2}} \frac{\left\|A_{\left(j\right)}\right\|_{2}^{2}}{\|A\|^2_F}+(1-\theta)\frac{ \left\|A^Tr_k\right\|_{2}^{2}}{\|A\|_{F}^{2}}\right)\notag\\
&=~|\mathcal{J}_{k}|\frac{\|A^Tr_k\|^{2}_{2}}{\|A\|^2_F},\notag
\end{align}
which together with \cref{lem1} gives
\begin{align}
\|\widetilde{A}^{T}_{\mathcal{J}_{k}}A\left(x_{k}-x_\star\right)\|^2_2&\geq~|\mathcal{J}_{k}|   \frac{\sigma^{2}_{\min}(A)}{\left\|A\right\|_{F}^{2}} \|x_k-x_\star\|^{2}_{A^{T}A}.\label{12ww111}
\end{align}
Thus, substituting \cref{12ww111} into \cref{754r}, we obtain
the estimate \cref{4sd}.
\end{proof}

\section{Experimental results}\label{sec5}
In this section, we compare the GRCD, GBGS, and PGBGS methods  
using the matrix $A\in R^{m\times n}$ from two sets. One is generated randomly by using the MATLAB function \texttt{randn}, and the other one contains the matrices \texttt{abtaha1} and \texttt{ash958} from the University of Florida sparse matrix collection \cite{Davis2011}. To compare these methods fairly, 
we set $\theta=\frac{1}{2}$ in $\varepsilon_{k}$, i.e., let the index sets of the three methods be the same. In addition, we set $\omega=1$ in the PGBGS method.


We compare the three methods mainly in terms of the flops 
(denoted as ``Flops''), the iteration numbers (denoted as ``Iteration'') and the computing time in seconds (denoted as ``CPU time(s)''). In our specific experiments, the solution vector $x_\star$ is generated randomly by the MATLAB function \texttt{randn}. For the consistent problem, we set the right-hand side $b=Ax_{\star}$. For the inconsistent problem, we set the right-hand side $b=Ax_{\star}+r_{0}$, where $r_0$ is a nonzero vector belonging to the null space of $A^{T}$ generated by the MATLAB function \texttt{null}. All  the test problems are started from an initial zero vector $x_{0}=0$ and terminated once the \emph{relative solution error} (\texttt{RES}), defined by \[\texttt{RES}=\frac{\left\|x_{k}-x_{\star}\right\|^{2}_2}{\left\|x_{\star}\right\|^{2}_2},\] satisfies $\texttt{RES}<10^{-6}$ or the number of iteration steps exceeds $ 200,000$.

For the first class of matrices, that is, the matrices generated randomly, the numerical results on RES in base-10 logarithm versus the Flops, Iteration and CPU time(s) of the three methods are presented in \cref{fig1} when the linear system is consistent, and in \cref{fig2} when the linear system is inconsistent.

\begin{figure}[!htb]
 \begin{center}
\includegraphics [height=4cm,width=15cm ]{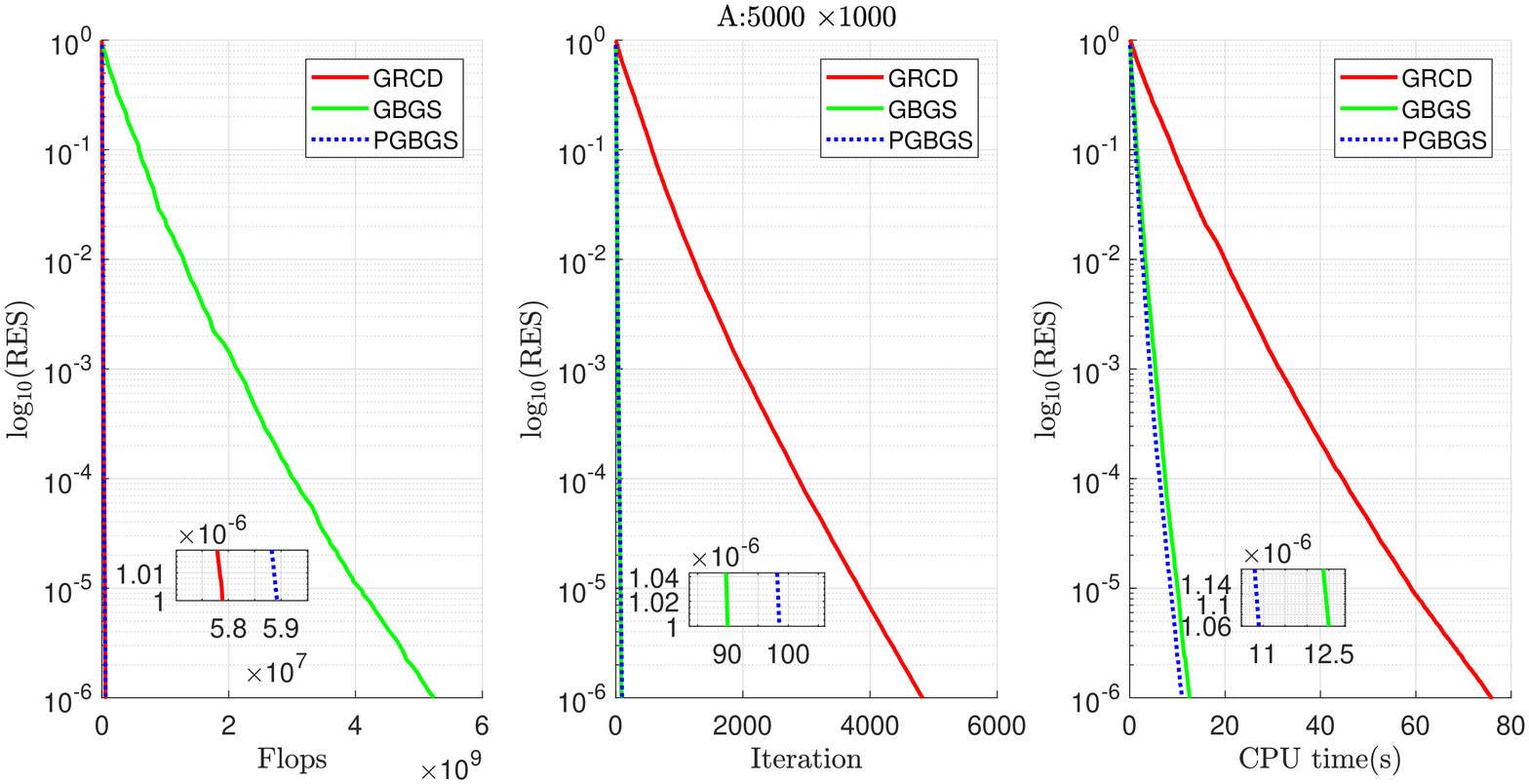}
\includegraphics [height=4cm,width=15cm ]{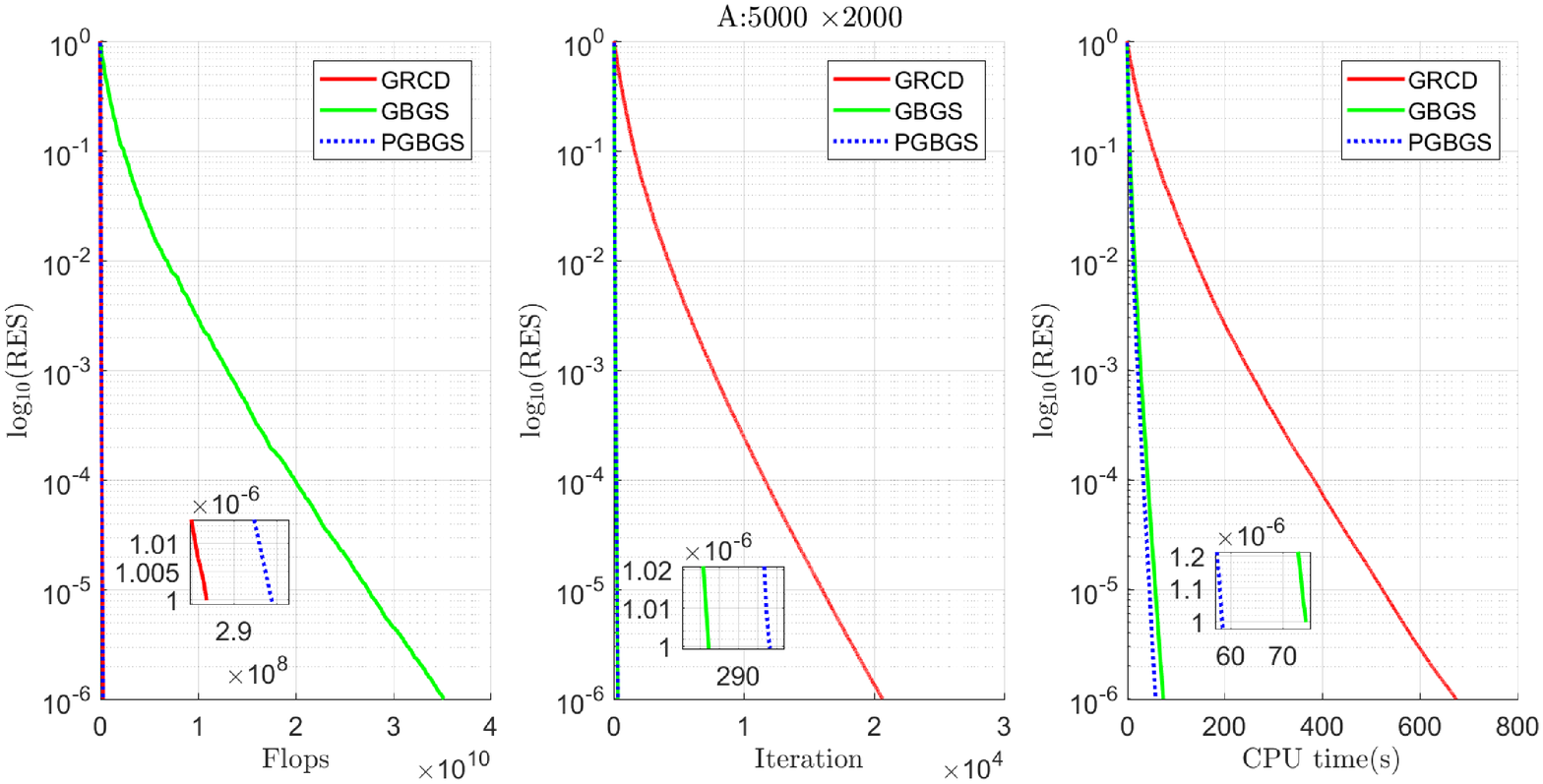}
 \end{center}
\caption{\rm Results for dense consistent linear systems with matrices generated randomly. (up) $A$ is of order $5000\times 1000$; (down) $A$ is of order $5000\times 2000$.}\label{fig1}
\end{figure}
\begin{figure}[!htb]
 \begin{center}
\includegraphics [height=4cm,width=15cm ]{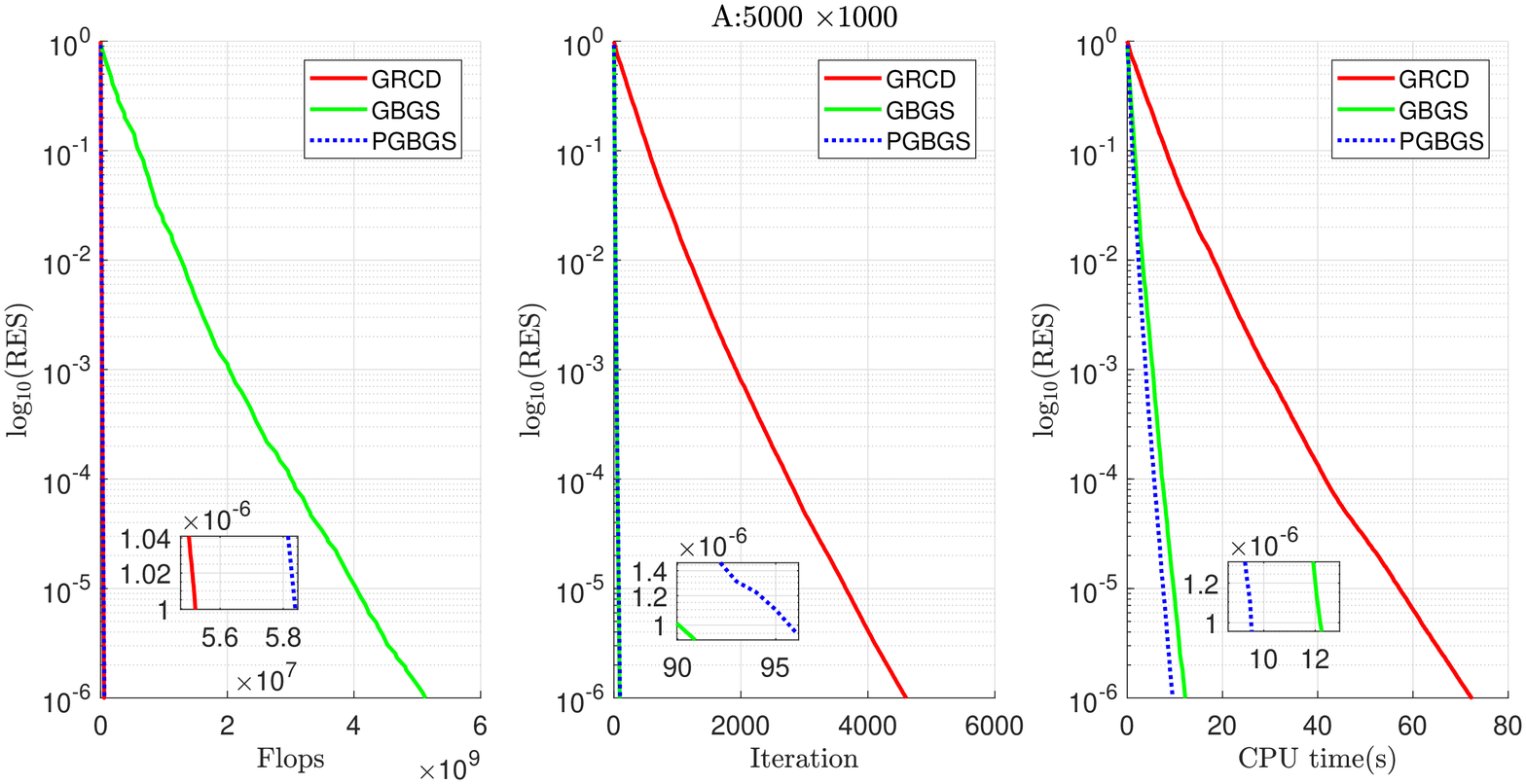}
\includegraphics [height=4cm,width=15cm ]{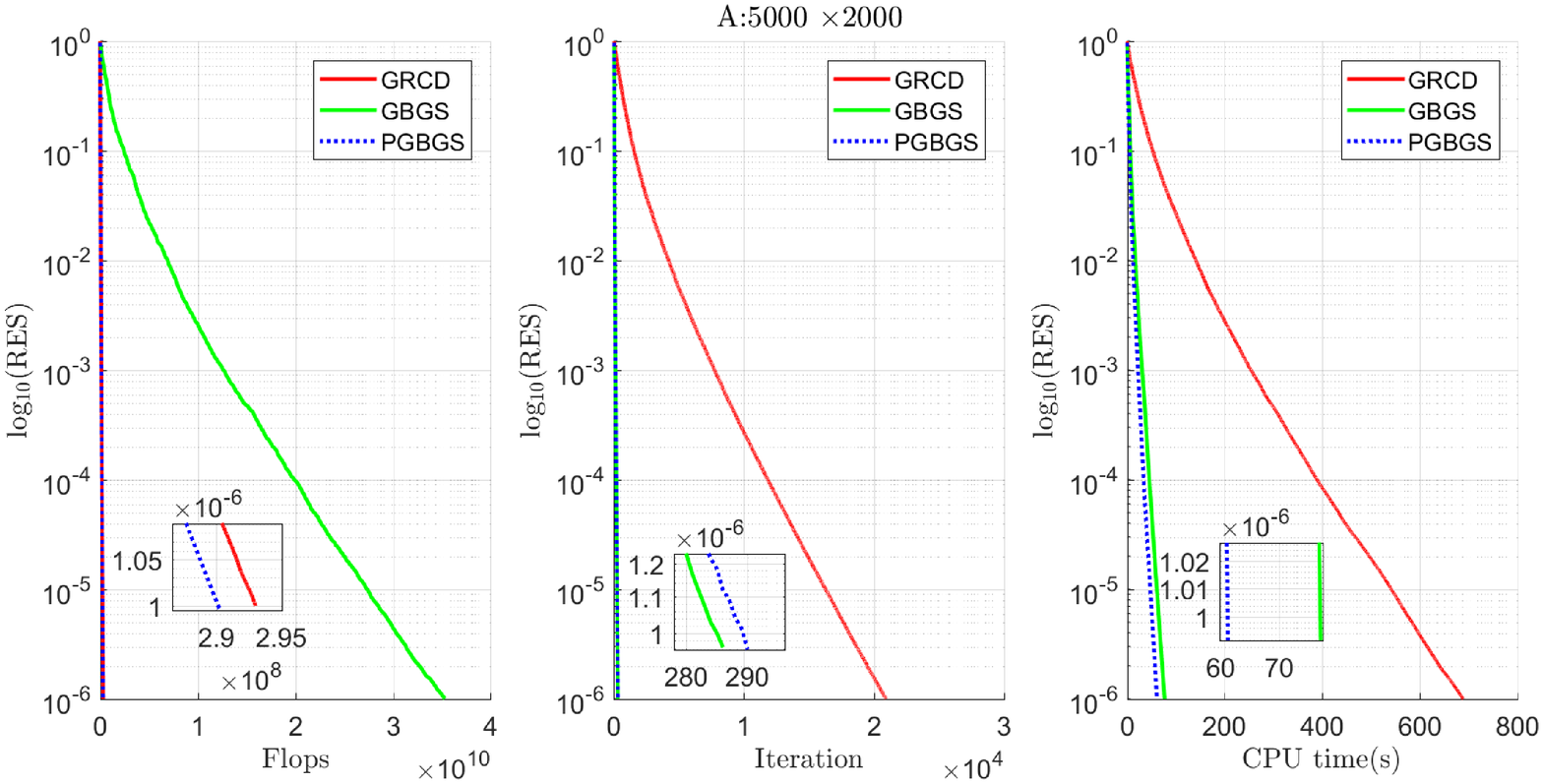}
 \end{center}
\caption{\rm Results for dense inconsistent linear systems with matrices generated randomly. (up) $A$ is of order $5000\times 1000$; (down) $A$ is of order $5000\times 2000$.}\label{fig2}
\end{figure}

From \cref{fig1,fig2}, we can find that the PGBGS method requires almost the same number of flops as the GRCD method, and the GBGS method needs the most flops. 
However, the GBGS method requires the fewest iterations, and, as desired, the PGBGS method needs the least computing time. Moreover, the differences in iterations and computing time between our methods and the GRCD method are remarkable. These results are consistent with the analysis in \cref{rmk5}. 

For the second class of matrices, that is, the sparse full column rank matrices from \cite{Davis2011}, we plot the numerical results on RES in base-10 logarithm versus the Flops, Iteration and CPU time(s) of the three methods in \cref{fig3} when the linear system is consistent, and in \cref{fig4} when the linear system is inconsistent.

\begin{figure}[!htb]
 \begin{center}
\includegraphics [height=4cm,width=15cm ]{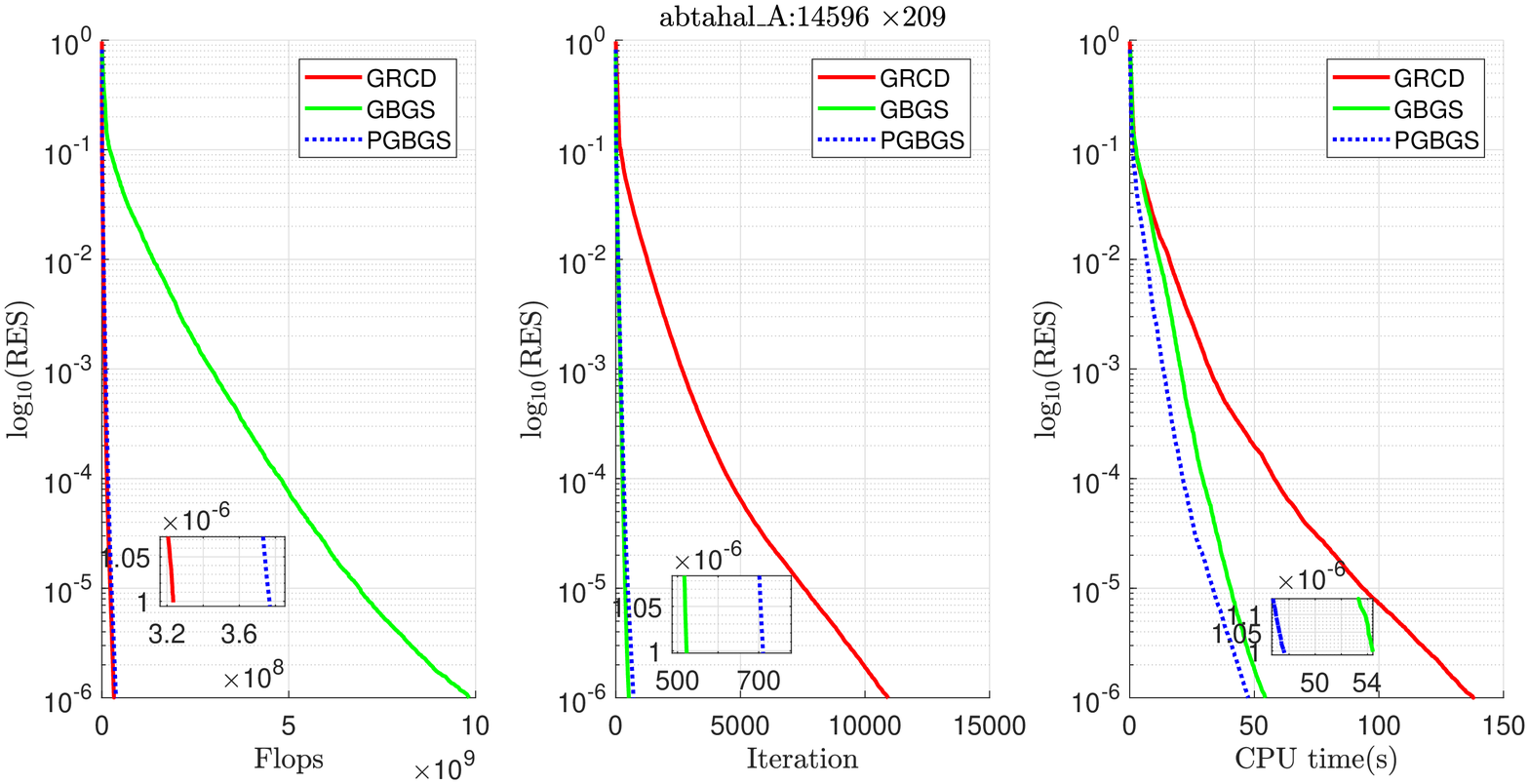}
\includegraphics [height=4cm,width=15cm ]{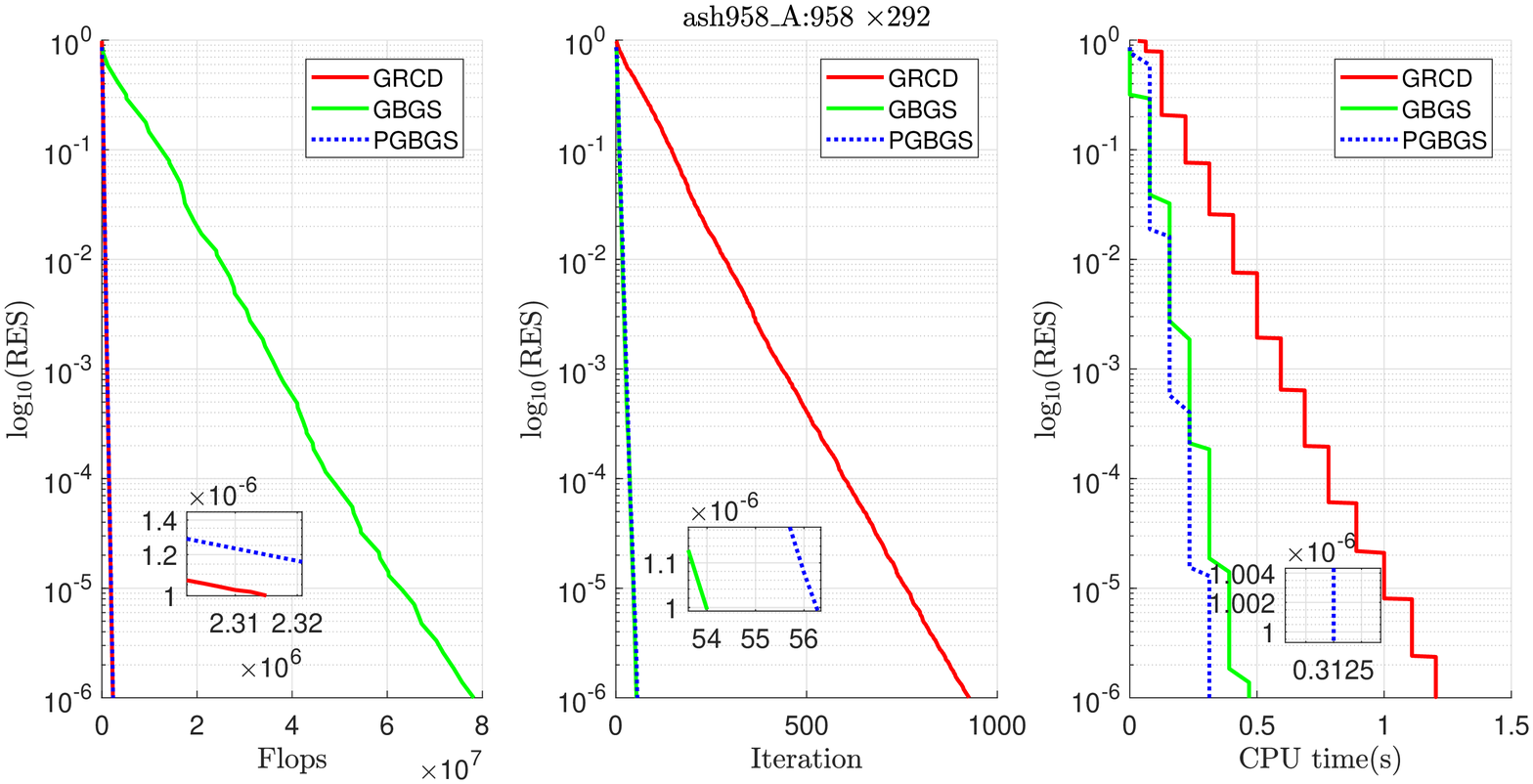}
 \end{center}
\caption{\rm Results for sparse consistent linear systems with matrices from the University of Florida sparse matrix collection. (up) \texttt{abtaha1}; (down) \texttt{ash958} .}\label{fig3}
\end{figure}
\begin{figure}[!htb]
 \begin{center}
\includegraphics [height=4cm,width=15cm ]{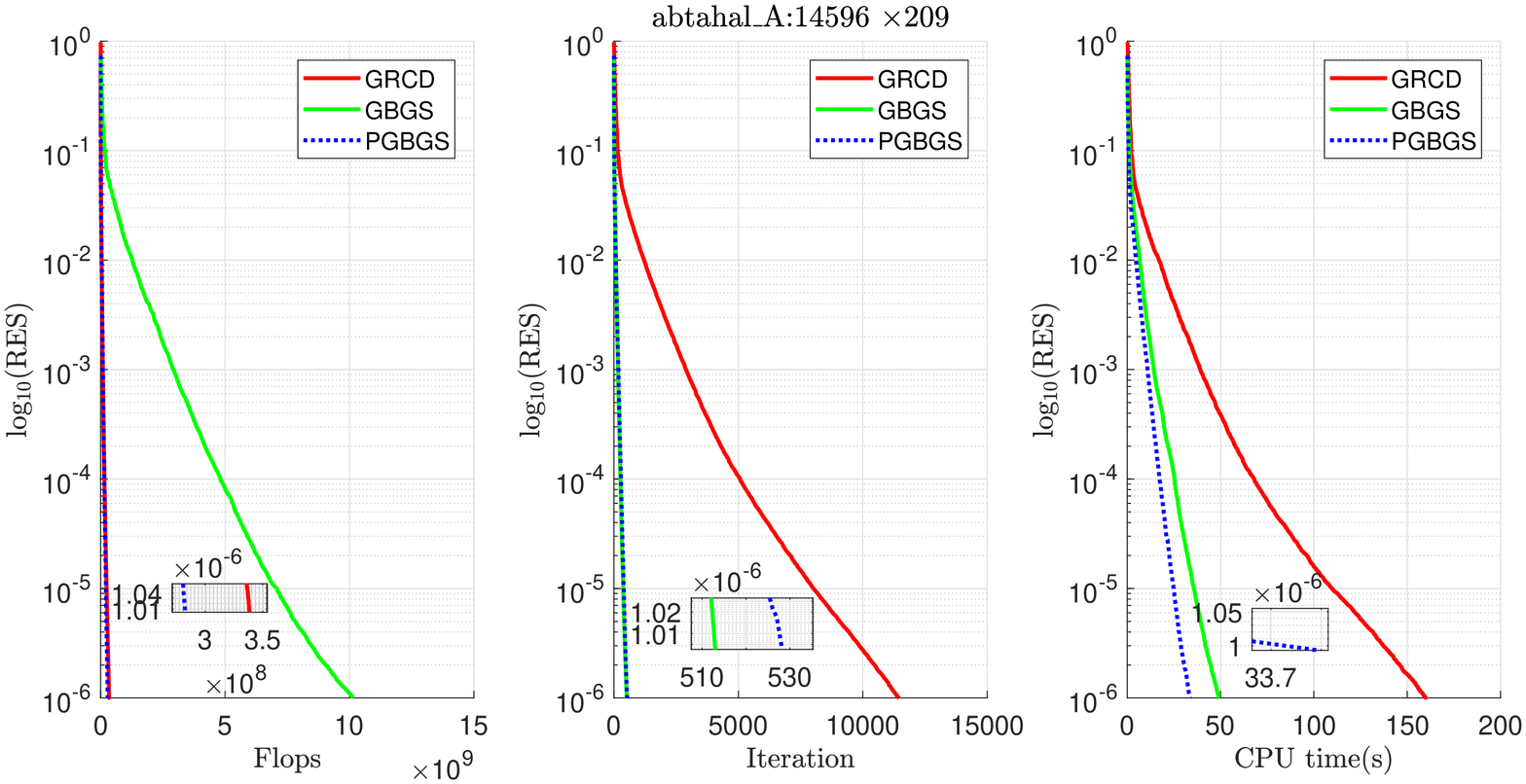}
\includegraphics [height=4cm,width=15cm ]{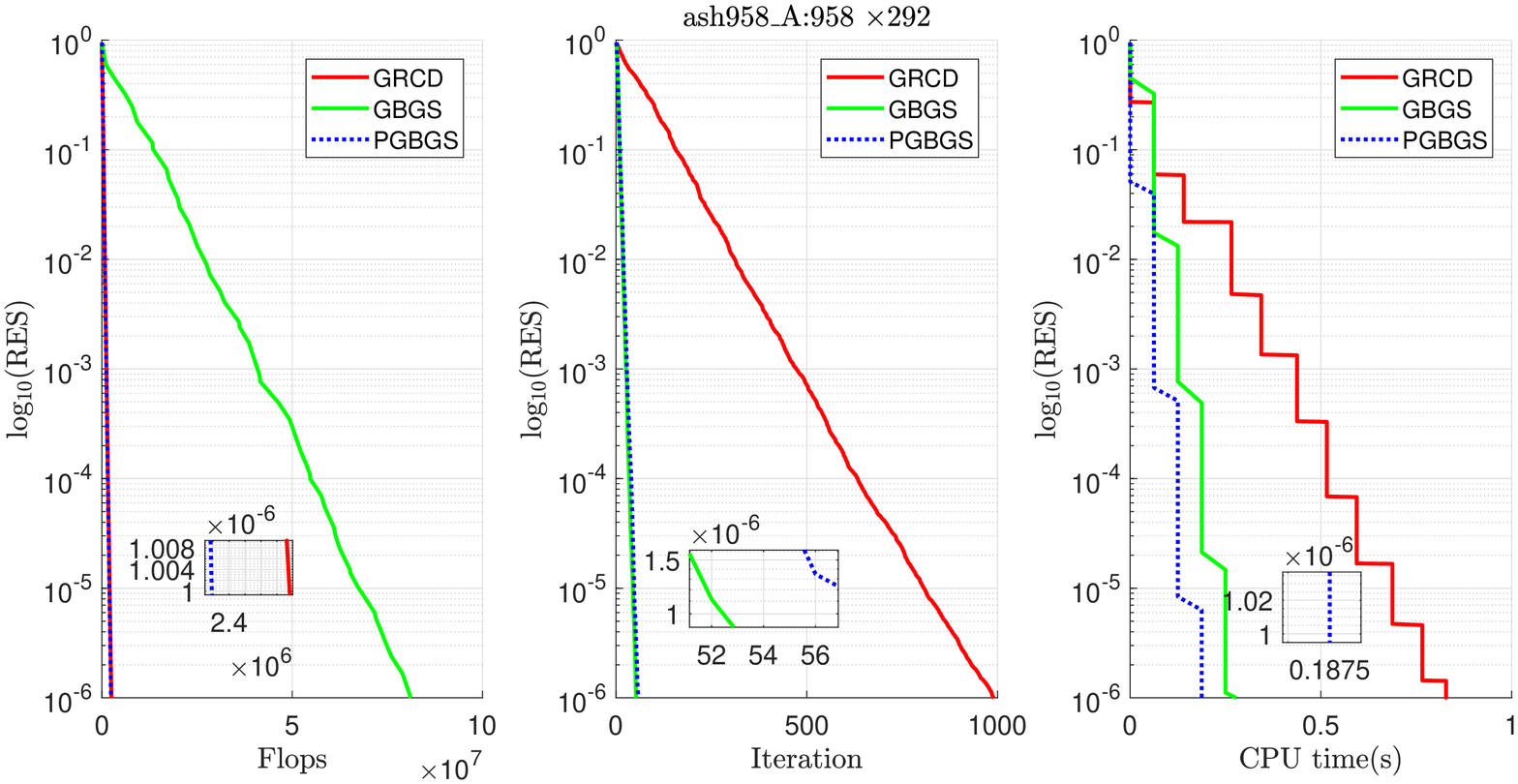}
 \end{center}
\caption{\rm Results for sparse inconsistent linear systems  with matrices from the University of Florida sparse matrix collection. (up) \texttt{abtaha1}; (down) \texttt{ash958}.}\label{fig4}
\end{figure}

The findings from \cref{fig3,fig4} are similar to the ones from \cref{fig1,fig2}. That is, the GBGS and PGBGS methods can far outperform the GRCD method in terms of the number of iterations and computing time.


Therefore, in all the cases, although the GBGS method needs more flops because of the computation of the Moore-Penrose pseudoinverse, it is still much faster than the GRCD method. This is mainly because the latter only updates one coordinate in each iteration while the former executes multiple coordinates. The PGBGS method has more advantages because it doesn't need to calculate the Moore-Penrose pseudoinverse any more.


 \clearpage
\bibliographystyle{siamplain}
\bibliography{references}
\end{document}